\newcommand{\toc}{\tableofcontents}
\theoremstyle{plain}
\newtheorem{theorem}{Theorem}[section]
\newtheorem*{theorem*}{Theorem}
\newtheorem*{corollary*}{Corollary}
\newtheorem{lemma}[theorem]{Lemma}
\theoremstyle{definition}
\newtheorem{remark}[theorem]{Remark}
\newtheorem{definition}[theorem]{Definition}
\newtheorem*{definition*}{Definition}
\DeclareMathOperator{\Min}{\mathrm{Min}}
\DeclareMathOperator{\Lk}{\mathrm{Lk}}
\newcommand{\acts}{\curvearrowright}
\newcommand{\e}{\varepsilon}
\newcommand{\IR}{\mathbb{R}}
\newcommand{\ZI}{\mathbb{Z}}
\DeclareMathOperator{\Aut}{\mathrm{Aut}}
\DeclareMathOperator{\lra}{\leftrightarrow}
\DeclareMathOperator{\surj}{\twoheadrightarrow}
\newcommand{\SL}{\mathrm{SL}}
\newcommand{\ts}{\textsection}
\newcommand{\ip}[1]{\langle#1\rangle} 
\author{Sylvain Barr\'e}
\author{Mika\"el Pichot}
\address{Sylvain Barr\'e, UMR 6205, LMBA, Université de Bretagne-Sud,BP 573, 56017, Vannes, France}\email{Sylvain.Barre@univ-ubs.fr}
\address{Mika\"el Pichot, McGill University, 805 Sherbrooke St W., Montr\'eal, QC H3A 0B9, Canada}\email{pichot@math.mcgill.ca}
\title{A  virtual geometric action of a braid group}
\begin{document}

\begin{abstract} 
We study a geometric action on a CAT(0) space of a finite index subgroup of the quotient group of the braid group $B_4$ on 4 strands by its center. 
\end{abstract}

\maketitle

\section{Introduction}

Let $G$ be a countable group and $X$ be a CAT(0) space. 
We say that $G$ \emph{acts  virtually geometrically} on $X$ if there exists a group $H$ of finite index in $G$ which admits a geometric (i.e., isometric, properly discontinuous, and cocompact) action on $X$. 

Let $B_4$ be the braid on 4 strings
\[
B_4=\ip{a,b,c\mid aba=bab,\ bcb=cbc,\ ac=ca};
\]
let $Z$ be the center of $B_4$ (it is isomorphic to $\ZI$); the quotient group $B_4/Z$ acts geometrically, by the work of  Brady  \cite{brady1994automatic,brady2000artin},   on a CAT(0) space of dimension 2. This space, henceforth denoted  $X_0$, is a simplicial complex known as the Brady complex, and the corresponding action is called the standard action of $B_4/Z$.

\begin{theorem}\label{T - virt geom}
There exists a CAT(0) complex $X_1$ of dimension 2 with the following two properties
\begin{enumerate}
\item $B_4/Z$ acts virtually geometrically on $X_1$; 

\item $B_4/Z$ does not admit a properly discontinuous action on $X_1$ by semisimple isometries.
\end{enumerate} 
\end{theorem}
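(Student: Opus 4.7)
The idea is to construct $X_1$ as a metric deformation of the Brady complex $X_0$: replace some of its $2$-cells, currently equilateral Euclidean triangles, by isoceles triangles of a slightly different shape, choosing which cells to modify in a way invariant under a carefully chosen normal finite-index subgroup $H\le B_4/Z$ but not under all of $B_4/Z$. A natural candidate for $H$ is the kernel of a surjection from $B_4/Z$ onto a small finite group (for instance via $B_4\surj S_4$ composed with a further projection), and the collection of deformed cells should form a union of $H$-orbits whose $B_4/Z$-orbit is strictly larger, so that a nontrivial coset representative of $B_4/Z/H$ must send a deformed cell to an undeformed one.

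To establish (1), I would verify Gromov's link condition at each vertex of $X_1$. The deformation is local, so only finitely many link types are affected, and for parameters sufficiently close to the equilateral values the links remain $\mathrm{CAT}(1)$ graphs of systole at least $2\pi$. Simple connectivity is inherited from $X_0$, so $X_1$ is CAT(0). By construction the deformation commutes with $H$, and the geometric action of $H$ on $X_0$ transports to a geometric action of $H$ on $X_1$.

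For (2), suppose toward a contradiction that $B_4/Z$ acts on $X_1$ properly discontinuously by semisimple isometries. Pick a rank-two abelian subgroup $A\cong\IZ^2\le B_4/Z$ coming from centralizers of commuting braids. By the flat torus theorem, $A$ stabilizes a $2$-flat $F\subset X_1$ on which it acts cocompactly, and the combinatorial type of $F$ is metrically distinguished by the deformation: it is tiled by parallelograms of a specific shape determined by which of the cells it traverses are deformed. Choosing $g\in B_4/Z\setminus H$ so that $gAg^{-1}$ stabilizes a flat $F'$ that must be tiled by parallelograms of a different metric shape (by virtue of $g$ crossing between deformed and undeformed $H$-orbits), and using the fact that a semisimple action carries flats to flats isometrically, one obtains the desired contradiction.

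The main obstacle is to execute these steps simultaneously: the deformation must be small enough to preserve CAT(0) and support a geometric $H$-action (which in particular requires that enough $2$-flats survive for the $\IZ^2$-subgroups of $H$), yet robust enough to be detected by every proper semisimple action, not merely those extending the natural $H$-action. I expect the delicate step to be a rigidity analysis of the periodic $2$-flats of $X_1$ ruling out exotic semisimple actions of $B_4/Z$, together with a careful identification of $\IZ^2$-subgroups of $B_4/Z$ and their normalizers.
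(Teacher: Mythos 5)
Your construction is genuinely different from the paper's, and the difference matters: the paper does \emph{not} build $X_1$ as a deformation of the Brady complex $X_0$. Instead it takes three copies of the Brady--McCammond complex for the $3$-strand subgroup $B(a,e,b)$ (with angle parameter $\pi/3$) and glues them cyclically along singular edges, following the order-$3$ element $y$; the resulting vertex link is a graph into which the link of $X_0$ provably admits no locally isometric topological embedding (Lemma \ref{L - link embedding}). That link obstruction is the engine of part (2): by the Crisp--Paoluzzi rigidity theorem, \emph{any} properly discontinuous semisimple action of $G_0$ on a $2$-dimensional CAT(0) space $X$ forces an equivariant, locally injective, locally isometric (up to scaling) map $X_0\to X$, hence an embedding of the Brady link into a link of $X$ --- impossible for the paper's $X_1$. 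Your deformed complex, by contrast, has links that are small perturbations of the Brady link, so this obstruction is unavailable to you by design.

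This is where your argument has a genuine gap. Your proof of (2) needs to rule out \emph{every} proper semisimple action of $B_4/Z$ on $X_1$, not just those extending the transported $H$-action, and the flat-torus argument you sketch is circular: if $A\cong\IZ^2$ stabilizes a flat $F$, then $gAg^{-1}$ stabilizes $gF$, which is isometric to $F$, so no contradiction arises unless you already know \emph{which} periodic flats of the complex the hypothetical action must select --- and that is exactly the rigidity input you are missing. Crisp--Paoluzzi's Theorem 2 shows that many equivariant deformations of $X_0$ \emph{do} carry geometric $G_0$-actions, so you would further have to prove that your $H$-invariant, non-$G_0$-invariant deformation is not isometric to any complex in their classified family, nor contains an admissible image of $X_0$ under the canonical rigidity map; you defer precisely this step. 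There is also an unaddressed consistency issue in part (1): in $X_0$ each edge is shared by several triangles, so deforming only an $H$-orbit of $2$-cells requires the deformed and undeformed triangles to still match along common edges, which severely constrains (and may preclude) the deformation you describe. The paper sidesteps all of this by constructing a space that is demonstrably \emph{not} a deformation of $X_0$ (see Theorem \ref{T - th1 prime}) and localizing the contradiction entirely in the vertex links.
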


We give a construction of the space $X_1$ in \ts\ref{S - X}. To prove the first assertion, we  need to find a group $G_1$ which is of finite index in $G_0=B_4/Z$  and which acts geometrically on $X_1$. This group is described in  \ts\ref{S - G}. It  can be chosen to be the image in the central quotient $B_4/Z$ of the stabilizer of a point under the standard permutation representation of $B_4$.

The standard action of $G_0$ on the Brady complex $X_0$ was studied by Crisp and Paoluzzi in  \cite{crisp2005classification}. In this paper they prove in particular a fundamental rigidity theorem, which shows (see \cite[Theorem 1]{crisp2005classification}) that the Brady complex $X_0$ is, essentially, the only 2-dimensional CAT(0) space on which $G_0$ can act geometrically.  

This statement needs to be formulated carefully, since the standard action of $G_0$ on $X_0$ can always be deformed into a geometric action which is not the standard action, showing that  $X_0$ is not uniquely determined by the group $G_0$ (see Theorem 2 in \cite{crisp2005classification} for a precise statement).  The  rigidity theorem asserts that for every geometric action  $G_0\acts X$ on a CAT(0) space of covering dimension 2, there exists a uniquely determined equivariant mapping $f\colon X_0\to X$ which is locally injective, and locally isometric (up to a constant scaling of the metrics on either $X_0$ or $X$) on the complement of the 0-skeleton of $X_0$; in this sense, the group $G_0$ ``remembers'' both the CAT(0) space $X_0$ and the standard action.

The space $X_1$  is not one of the Crisp--Paoluzzi deformations of $X_0$. More precisely, the following holds.

\begin{theorem}\label{T - th1 prime}
There does not exist a $G_1$-equivariant map $f\colon X_0\to X_1$  which is locally injective and locally isometric (up to a constant scaling of the metrics) on the complement of the 0-skeleton of $X_0$. 
\end{theorem}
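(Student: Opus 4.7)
The plan is to argue by contradiction. Suppose such a $G_1$-equivariant map $f\colon X_0 \to X_1$ exists, with scaling constant $\lambda>0$. First I would observe that the combined hypotheses force $f$ to be cellular: each open $2$-simplex of $X_0$ is carried isometrically (up to the factor $\lambda$) to a piece of a $2$-simplex of $X_1$, each open edge to an open edge, and each vertex to a vertex. Consequently, at every vertex $v \in X_0$, the map $f$ induces an injective local isometry of CAT$(1)$ links
\[
df_v \colon \Lk(v, X_0) \hookrightarrow \Lk(f(v), X_1),
\]
again up to the scale~$\lambda$.

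Next I would reduce the problem to a system of link embeddings. Since the standard $G_0$-action is vertex-transitive on $X_0$, all links $\Lk(v, X_0)$ are isomorphic to a single graph $L_0$, the standard Brady link. Restricting the action to $G_1$ produces exactly $[G_0:G_1]=4$ vertex-orbits in $X_0$. On the target side, the cocompact $G_1$-action on $X_1$ has only finitely many vertex-orbits, whose links $L_1^{(1)},\ldots,L_1^{(k)}$ are the explicit graphs coming from the construction of Section~\ref{S - X}. By $G_1$-equivariance, $df_v$ depends only (up to the vertex stabilizer) on the $G_1$-orbit of $v$, so the existence of $f$ reduces to choosing at most four isometric embeddings $L_0 \hookrightarrow L_1^{(i)}$ and verifying the compatibility along every edge: whenever $v, v'$ are endpoints of an edge $e$ of $X_0$, the embeddings $df_v$ and $df_{v'}$ must send the link-points corresponding to $e$ to the two endpoints of a common image edge in $X_1$.

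Finally I would carry out a detailed case analysis of the graphs $L_1^{(i)}$ built in Section~\ref{S - X}, aimed at showing that no such coherent system of embeddings exists. I expect the main obstacle to be global rather than local: any individual link $L_1^{(i)}$ is likely to contain an isometric copy of $L_0$, so a local embedding can always be found at a single vertex. The contradiction should therefore emerge from the gluing step, by following a loop in the quotient graph $G_1 \backslash X_0^{(1)}$ of the $1$-skeleton and showing that the monodromy of the required link embeddings through the four $G_1$-orbits of vertices cannot close up. This would reflect the intentional asymmetry built into $X_1$: although $[G_0:G_1]=4$ is small, the $G_1$-orbit decomposition of the $0$-skeleton of $X_0$ carries strictly more combinatorial data than the transitive $G_0$-action, and $X_1$ is designed precisely so that this extra data cannot be accommodated. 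As a sanity check on the overall strategy, a coherent $f$ would heuristically let one promote the $G_1$-equivariance to a full $G_0$-compatibility and produce a proper semisimple action of $G_0$ on $X_1$, in conflict with part~(2) of Theorem~\ref{T - virt geom}; this gives an alternative, less direct route to the same contradiction.
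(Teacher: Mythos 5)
Your setup is right as far as it goes: a locally injective, locally isometric (up to scaling) map $X_0\to X_1$ does induce injective locally isometric maps of links at vertices, and this is indeed the mechanism the paper exploits. But from that point on the proposal diverges from what actually happens, and the decisive step is never carried out. You conjecture that ``any individual link $L_1^{(i)}$ is likely to contain an isometric copy of $L_0$, so a local embedding can always be found at a single vertex,'' and you therefore plan to derive the contradiction from a global monodromy obstruction over the four $G_1$-orbits of vertices. This is the opposite of the truth for this $X_1$: the paper's Lemma \ref{L - link embedding} shows that the link of the Brady complex admits \emph{no} topological, locally isometric embedding into the link of $X_1$ at all. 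The proof is a short combinatorial check: the Brady link contains a distinguished cycle of length $8\pi/3$ through its eight order-$3$ vertices with chords of length $2\pi/3$ joining $k$ to $k+4$, and tracing where these vertices can go in the link of $X_1$ (the tripled $B(u,v,w)$ link of Lemma \ref{L - X1 cat0}) forces two vertices such as $z_{a,e}^+$ and $z_{e,\hat b}^-$ to be closer than they actually are. Because the obstruction is purely local, no equivariance, orbit bookkeeping, or holonomy argument is needed; the paper even notes the statement holds for every finite index subgroup for this reason.

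Concretely, the gap is that your proof has no content at the point where content is required: the ``detailed case analysis of the graphs $L_1^{(i)}$'' is announced but not performed, and the expectation guiding it points at the wrong obstruction, so executing your plan as written would stall (you would discover at the first step that the local embedding you assumed exists does not, which is the theorem's real engine). The closing ``sanity check'' is also not a valid fallback: part (2) of Theorem \ref{T - virt geom} is itself proved in the paper \emph{from} Lemma \ref{L - link embedding} together with the Crisp--Paoluzzi rigidity theorem, and the step ``promote $G_1$-equivariance to a proper semisimple $G_0$-action on $X_1$'' is asserted only heuristically and has no justification. To repair the proof you need to actually establish the non-embeddability of the Brady link into the link of $X_1$, which is the single lemma the whole theorem rests on.
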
  

(In this statement the group $G_1$  acts geometrically on $X_0$ through the standard action of $G_0$.) 

\begin{remark} The map $f\colon X_0\to X$ in the Crisp--Paoluzzi rigidity theorem (for the group $G_0$) needs not be  injective. For every $\e>0$, there exists a geometric action $G_0\acts X_\e$ on a 2-dimensional CAT(0) complex $X_\e$ (a deformation of $X_0$), for which the canonical uniquely determined map $f_\e\colon X_0\to X_\e$ provided by the rigidity theorem identifies two distinct orbits or $X_0$ at distance at most $\e$ (see Theorem 2, (ii) in \cite{crisp2005classification}).  
\end{remark}

In the opposite direction, one may wonder if there exists a $G_1$-equivariant mapping $f\colon X_1\to X_0$ subject to similar assumptions, where $G_1$ acts on $X_0$ through the standard action.  The existence of such a map would suggest that the action $G_1\acts X_1$ could play the role of the standard action for the group $G_1$.  
The following implies that such a map also does not exist.

\begin{theorem}\label{T - th2}
There does not exist a CAT(0) space $X$ of covering dimension 2, on which $G_0$ admits a proper  action by semisimple isometries, and a $G_1$-equivariant map $f_1\colon X_1\to X$, where $G_1$ acts geometrically on $X$ as a subgroup of $G_0$, such that $f_1$ is locally injective and locally isometric (up to scaling) outside of the 0-skeleton of $X_1$.  
\end{theorem}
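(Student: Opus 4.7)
The plan is to argue by contradiction. I will first use the hypothesis that $G_1$ acts geometrically on $X$: since $G_1$ has finite index in $G_0$ and $G_1$ acts cocompactly, the $G_0$-action on $X$ is also cocompact; combined with the assumed proper semisimple isometric $G_0$-action, this upgrades to a geometric action of $G_0$ on the 2-dimensional CAT(0) space $X$.

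Next I would apply the Crisp--Paoluzzi rigidity theorem (\cite[Theorem 1]{crisp2005classification}) to the geometric action $G_0 \acts X$, obtaining a uniquely determined $G_0$-equivariant map $g \colon X_0 \to X$ which is locally injective and locally isometric, up to a constant scaling, outside the $0$-skeleton $X_0^{(0)}$. Restricted to $G_1 \leq G_0$, the map $g$ is in particular $G_1$-equivariant.

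The heart of the argument is to construct, by lifting $g$ through $f_1$, a $G_1$-equivariant map $h \colon X_0 \to X_1$ satisfying $f_1 \circ h = g$ which is locally injective and locally isometric (up to scaling) on the complement of $X_0^{(0)}$; the existence of such an $h$ contradicts Theorem \ref{T - th1 prime}. I first observe that $f_1$ must be surjective: its image is $G_1$-invariant, closed by properness, and open because $f_1$ is a local homeomorphism away from the discrete set $X_1^{(0)}$, so by connectedness of $X$ it equals $X$. I then fix basepoints $x_\star \in X_0 \setminus X_0^{(0)}$ and $y_\star \in X_1 \setminus X_1^{(0)}$ with $f_1(y_\star) = g(x_\star)$, set $h(x_\star) = y_\star$, and extend $h$ by continuation along paths in $X_0$ using that $f_1$ is a local homeomorphism on $X_1 \setminus X_1^{(0)}$.

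The hard part will be to show that this lift is well-defined, i.e., that every loop in $X_0$ based at $x_\star$ lifts through $f_1$ to a loop in $X_1$. Since $X_0$ is simply connected, it suffices to verify triviality of the local monodromy around each vertex $v \in X_0^{(0)}$, which reduces to a compatibility between the link-immersion induced by $g$ at $v$ and the link-immersion induced by $f_1$ at the corresponding preimage vertex of $X_1^{(0)}$. I would establish this compatibility from $G_1$-equivariance combined with the uniqueness clause of Crisp--Paoluzzi rigidity, which pins down the behavior of $g$ on orbits of vertex stabilizers. Once $h$ is constructed, its $G_1$-equivariance and its local injective and isometric (up to scaling) properties on $X_0 \setminus X_0^{(0)}$ follow from the corresponding properties of $g$ and $f_1$, yielding the desired contradiction with Theorem \ref{T - th1 prime}.
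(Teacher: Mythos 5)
Your first two steps (upgrading the $G_0$-action to a geometric one and invoking the Crisp--Paoluzzi theorem to get $g=f_0\colon X_0\to X$) coincide with the paper's proof. After that you diverge, and the divergent part has genuine gaps. First, your surjectivity argument for $f_1$ fails: a map that is locally injective and locally isometric outside the $0$-skeleton of a $2$-complex need not be an open map into a $2$-dimensional CAT(0) space --- its image can be (and in the Crisp--Paoluzzi setting typically is) a proper closed invariant subcomplex of $X$, with $X$ containing additional material. So ``open $+$ closed $+$ connected'' does not give $f_1(X_1)=X$, and without that you do not even know $g(X_0)\subseteq f_1(X_1)$, which is the minimal prerequisite for any lift $h$ with $f_1\circ h=g$. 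Second, even granting surjectivity, continuation along paths requires $f_1$ to have the unique path lifting property on $X_1\setminus X_1^{(0)}$; a local homeomorphism onto its image which is not a covering map (and whose domain, the complement of a discrete set, is not complete) does not have this property. Concretely, at the fixed point $p_1$ of $y$ the link of $p_1$ in $X_1$ is the graph of Lemma \ref{L - X1 cat0}, which $f_1$ merely immerses into the link of $f_1(p_1)$; distinct sheets of $X_1$ near $p_1$ can have overlapping images, so the local monodromy you propose to analyze is not even well defined, and the ``compatibility of link-immersions'' you would need is asserted rather than proved --- it is exactly where all the content lies.

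The paper's proof goes in the opposite direction and avoids lifting altogether: using that the min-set intersections $\Min(u)\cap\Min(z_{u,v})$ are flat planes on which $B(u,v,w)$ acts cocompactly, it identifies $f_1(X_1)$ as a union of such planes and shows the \emph{reverse} containment $f_1(X_1)\subseteq f_0(X_0)$. It then observes that $f_0$ embeds the link of the fixed point $p_0$ of $y$ in $X_0$ (a circle of length $2\pi$) into the link of $p=f_0(p_0)$, that $f_1$ must send the fixed point $p_1$ of $y$ in $X_1$ to $p$, and that local injectivity of $f_1$ would force the link of $p_1$ --- the non-circular graph of Lemma \ref{L - X1 cat0} --- to inject into a circle, which is absurd. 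If you want to salvage your approach you would essentially have to reprove this image comparison, at which point the direct link argument is shorter; as written, your proof does not close.
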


This results show that the Crisp Paoluzzi rigidity theorem does not hold for the group $G_1$, in the sense that there does not exist a geometric action $G_1\acts X_1$, that for every geometric action  $G_1\acts X$ on a CAT(0) space of covering dimension 2, there exists a uniquely determined equivariant mapping $X_1\to X$ which is locally injective, and locally isometric (up to a constant scaling of the metrics on either $X_1$ or $X$) on the complement of the 0-skeleton of $X_1$.   This is rather unusual, since many rigidity statements which hold for a given group (e.g., $G=\SL_3(\ZI)$) also do for their finite index subgroups.  

Finally, we note that (although we are more interested in the group $B_4/Z$ in the present paper)   the above results could also be formulated for the braid group itself. In particular, the following holds.

\begin{theorem}\label{T - virt geom 2}
There exists a CAT(0) complex $Y_1$ of dimension 3 with the following two properties
\begin{enumerate}
\item $B_4$ acts virtually geometrically on $Y_1$; 

\item $B_4$ does not admit a minimal properly discontinuous action on $Y_1$ by semisimple isometries.
\end{enumerate} 
\end{theorem}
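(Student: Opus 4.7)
The plan is to set $Y_1 := X_1 \times \RI$, endowed with the product CAT(0) metric, where $X_1$ is the 2-dimensional complex of Theorem~\ref{T - virt geom}. This space is CAT(0) of covering dimension~3, and the overall strategy parallels that of Theorem~\ref{T - virt geom}.

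For assertion~(1), note that the center $Z\subset B_4$ is infinite cyclic, and the preimage $\tilde G_1\subset B_4$ of $G_1\subset B_4/Z$ under the central quotient map has finite index in $B_4$. Letting a generator of $Z$ act on the $\RI$-factor of $Y_1$ by a fixed positive translation, and letting the quotient $\tilde G_1/Z=G_1$ act on the $X_1$-factor through its geometric action from Theorem~\ref{T - virt geom}, one obtains a geometric action of $\tilde G_1$ on $Y_1$, witnessing that $B_4$ acts virtually geometrically on $Y_1$.

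For assertion~(2), I argue by contradiction. Suppose $B_4$ admits a minimal properly discontinuous action on $Y_1$ by semisimple isometries, and fix a generator $z$ of $Z$. Since $z$ has infinite order, acts semisimply, and the action is proper, $z$ must be hyperbolic. The minset $\Min(z)\subset Y_1$ is then a non-empty closed convex subspace, preserved by the centralizer of $z$ in $B_4$, which equals $B_4$ itself since $z$ is central. Minimality forces $\Min(z)=Y_1$, and the flat strip theorem yields a splitting $Y_1\cong Y_1''\times\RI$ in which $z$ translates the $\RI$-factor, with $Y_1''$ a 2-dimensional CAT(0) space. A uniqueness-of-product-decomposition argument then identifies $Y_1''$ with $X_1$, and forces every $b\in B_4$ to act on $Y_1$ as a product isometry. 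Since $z$ acts trivially on the $X_1$-factor, the $B_4$-action descends to a properly discontinuous action of $B_4/Z$ on $X_1$ by semisimple isometries, contradicting assertion~(2) of Theorem~\ref{T - virt geom}.

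The main obstacle is the uniqueness step identifying $Y_1''=X_1$. By the de Rham-type decomposition theorem for CAT(0) spaces, this reduces to showing that $X_1$ itself admits no further Euclidean splitting $X_1=X_1'\times\RI$. I would verify this by inspecting the vertex links of $X_1$ constructed in \ts\ref{S - X}: a 2-dimensional CAT(0) complex splitting off an $\RI$-factor must locally look like (tree)$\times\RI$, a structure incompatible with the links appearing in $X_1$. A subsidiary and essentially routine point is that the descended $B_4/Z$-action on $X_1$ is properly discontinuous and semisimple; properness follows because $Z$ has finite index in the stabilizer in $B_4$ of any line $\{x\}\times\RI\subset Y_1$ (using properness of the $B_4$-action on $Y_1$ and discreteness of proper group actions on $\RI$), while semisimplicity descends to each factor of the product decomposition.
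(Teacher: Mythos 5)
Your proof of assertion (1) is the paper's: $Y_1=\RI\times X_1$ with the preimage $H_1$ of $G_1$ acting through $G_1$ on the $X_1$-factor and through a homomorphism to $\RI$ on the line. One small imprecision there: since $G_1$ has torsion ($y$ has order 3) while $B_4$ is torsion-free, the central extension $H_1\to G_1$ does not split, so you cannot literally let ``$Z$ act on $\RI$ and the quotient act on $X_1$''; you need a homomorphism $H_1\to\RI$ that is nonzero on $z$ (the paper uses the augmentation $B_4\surj\ZI$), and then let all of $H_1$ act diagonally.

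For assertion (2) you and the paper begin identically: $z$ is hyperbolic, minimality forces $\Min(z)=Y_1$, and the minset splitting theorem gives $Y_1=Y_1''\times\RI$ with the splitting preserved by all of $B_4$ (everything commutes with the central $z$), so the action descends to a proper semisimple action of $B_4/Z$ on the unknown factor $Y_1''$. You then diverge. You propose to identify $Y_1''$ with $X_1$ by uniqueness of product decompositions and to quote Theorem \ref{T - virt geom}(2) as a black box; the paper never makes this identification. Instead it applies the Crisp--Paoluzzi rigidity theorem directly to the descended $G_0$-action on $Y_1''$, obtains a locally isometric embedding of $\Lk(X_0)$ into a vertex link of $Y_1''$, suspends to land in $\Lk(Y_1)=\Sigma\Lk(X_1)$, and contradicts Lemma \ref{L - link embedding}. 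Your route is viable but carries two burdens that the suspension trick is designed to avoid: (i) uniqueness of the splitting off an $\RI$-factor is not available in Bridson--Haefliger at the generality of CAT(0) complexes --- you would need the de Rham decomposition theorem for metric spaces of finite affine rank (Foertsch--Lytchak) or an ad hoc argument; and (ii) you must actually prove that $X_1$ has no $\RI$ de Rham factor, which you only sketch. Point (ii) is genuinely checkable from Lemma \ref{L - X1 cat0}: a $2$-complex splitting as (tree)$\times\RI$ has every vertex link a metric suspension of a discrete set, i.e.\ a union of arcs of length $\pi$ joining two poles, and the glued link of $X_1$, with its several branch vertices and edges of length $\pi/3$ and $2\pi/3$, is not of this form. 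Finally, note that your black-box appeal to Theorem \ref{T - virt geom}(2) unwinds to exactly the same two inputs the paper uses here (the Crisp--Paoluzzi theorem plus Lemma \ref{L - link embedding}), so nothing is gained in terms of hypotheses; what your version buys is a cleaner logical reduction to the $2$-dimensional theorem, at the price of the decomposition-uniqueness step.
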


As observed in \cite{crisp2005classification} (see \ts3, Prop.\ 5), the classification of geometric actions of $B_4$ on 3-dimensional CAT(0) spaces is essentially equivalent to that of geometric actions of $G_0$ on 2-dimensional CAT(0) spaces.

\toc

\section{Construction of $G_1$}\label{S - G}

We define $G_1$ first.

\subsection{} Consider the presentation 

\begin{align*}
\ip{ a,b,c,d,e,f \mid\ & ba=ae=eb,\ de=ec=cd,\\
& bc = cf = fb,\ df = fa = ad,\\
&ca = ac,\ ef = fe }
\end{align*}

\noindent of $B_4$, where $a,b,c$ are the standard generators, and $e:=aba^{-1}$, $f=cbc^{-1}$,  and $d=(ac)^{-1}bac$ (see  \cite[\ts 3]{crisp2005classification}), and the following two graphs (see \cite[Fig.\ 4]{crisp2005classification}):

\includegraphics[width=8cm]{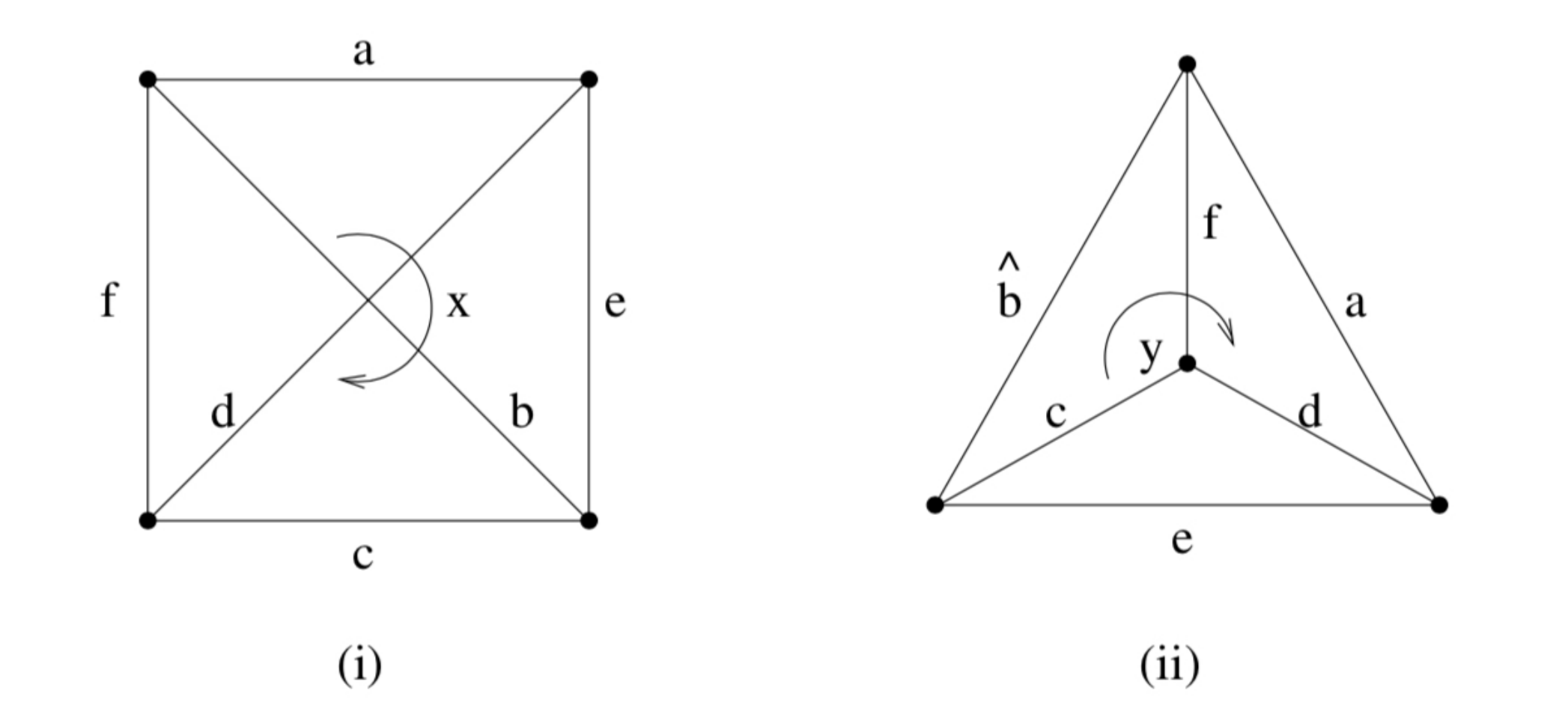}

\noindent where $\hat b:= c^{-1}bc^2$.
The action of $x:=bac$ by conjugation induces a quarter clockwise turn on the first graph,  that of $y:=xc$ a one third clockwise turn on the second  graph. These actions are given by: 
\[
x\colon a\mapsto e\mapsto c\mapsto f\mapsto a,\ b\lra d
\] 
\[
y\colon c\mapsto f\mapsto d\mapsto c,\ a\mapsto e\mapsto \hat b\mapsto a. 
\] 
As in \cite{crisp2005classification}, we shall work with the quotient  group $G_0:=B_4/Z$, where   $Z$ is the center of $B_4$.  It is well--known that the group $Z$ is cyclic generated by $z:=x^4=y^3$.

\subsection{}
We  use the same letters for the elements in $B_4$ and their projections to $G_0$; by the following, we define a subgroup of $G_0$.

\begin{definition}
$G_1:=\ip{a,y}$.
\end{definition}

We first observe that $G_0$ is generated by $x$  and $y$. This follows from the fact that $B_4$ is generated by $a$, $b$, and $c$, and the  equations  
\begin{itemize}
\item[] $c=xy^{-1}$
\item[] $a=xyx^{-2}$ 
\item[] $b=xa^{-1}c^{-1}$.
\end{itemize} The standard relations of $B_4$ translates into the following relations for $G_0$ in terms of the generators $x$ and $y$: 
\[
x^4=y^3=xyx^2y^{-1}x^{-1}y^{-1}x^{-2}y=e.
\]
As mentioned in the introduction, $G_1$ is of finite index in $G_0$. There are several ways to see this. One is to note that the group generated by $a$ and $y$ in $B_4$ maps surjectively onto the stabilizer of a point under the standard permutation representation of $B_4$. Another approach is to map these groups into $\SL_2(\ZI)$. (Note that $G_0$ is a subgroup of index 2 in the automorphism group of $F_2$.)  Namely, it is well known, see \cite[\ts II.1.4]{serre1977arbres}, that $\SL_2(\ZI)$ is generated by two matrices
\[
S=\left (\begin{matrix}0 & 1\\  -1& 0\end{matrix}\right)\text{ and } T=\left (\begin{matrix}1 & 0\\  1& 1\end{matrix}\right)
\]
with the relations $S^4=1$ and $(ST)^3=S^2$. This implies that the map
\[
\begin{cases}
x\mapsto S\\
y\mapsto -ST=S^{-1}T
\end{cases}
\]
extends to a homomorphism $\pi\colon G_0\to SL_2(\ZI)$, since $(-ST)^3=1$, and the element $x^2$ is mapped to the central element $S^2$.

By definition $\pi(G_1)$  is the subgroup of $\SL_2(\ZI)$ generated by $-T$ and $-ST$, which is a subgroup of index 4. Thus, $G_1$ has index 4 in $G_0$. Rouhgly speaking, the group $G_1$ removes the ``$x$-component'' of the torsion from $G_0$. 

\begin{remark}
 $G_0=\ip{a,x}$.
\end{remark}

\subsection{} The Brady complex $X_0$ can be described as follows.  The vertex set of $X_0$ is the set of left cosets of the cyclic group $\ip x$ of order 4 in $G_0$, and the Cayley graph of $G_0$ with respect to $\{a,b,c,d,e,f\}$ is a 4-to-1 simplicial covering of the 1-skeleton of $X_0$. Two distinct types of equilateral triangles are  attached to the 1-skeleton to produce a  space which is CAT(0). We refer to \cite[End of \ts 3]{crisp2005classification} for a description of the attaching maps.
  If three elements $u,v,w$ form a triangle in one of the two graphs drawn above, the group they generate in $B_4$ (which is isomorphic to the braid group on 3 strands) intersects $Z(B_4)$ trivially. We  denote, as in \cite{crisp2005classification}, its image in $G_0$ by $B(u,v,w)$. We also let $t_{u,v}=uv\in B(u,v,w)$ and $z_{u,v}=(uv)^3\in B(u,v,w)$. This group also admits, as Brady and McCammond have shown  \cite{brady2000three}, an action on a CAT(0) space, which   embeds isometrically in $X_0$. Similarly, if three elements $u,v,w$ in $G_1$ form a triangle in one of the graphs drawn above (or a conjugate), they also generate a subgroup $B(u,v,w)$ of $G_1$. 
   
\section{Construction of $X_1$}\label{S - X}

\subsection{}
The next step is to construct a  CAT(0) space $X_1$ on which $G_1$ acts geometrically. Before we do that, we explain the idea behind the definition of $G_1$. The proof of the rigidity theorem in \cite{crisp2005classification}  uses, in particular,   the relative positions of appropriate copies of $B(u,v,w)$ in $G_0$. Starting with a geometric action $G_0\acts X$ on a CAT(0) space of covering dimension 2, a certain region $R$ is defined by the intersection
\[
\Min(f)\cap \Min(z_{f,c})\cap \Min(c)\cap \Min(z_{c,d})\cap \Min(d)\cap \Min(z_{d,f})
\]
This set is convex, compact (possibly empty a priori) and invariant under the action of $y$.    
The proof of \cite[Theorem 2.1]{crisp2005classification}  falls into cases according to the shape of the region $R$.
 Crisp and Paoluzzi prove (see \cite[Prop.\ 8]{crisp2005classification}) that four cases need to be considered
\begin{enumerate}
\item $R$ is empty
\item $R$ is reduced to a single point
\item  $R$ a closed bounded segment which is fixed pointwise by $y$
\item $R$ is two dimensional
\end{enumerate}
They show that none of these cases can in fact occur, except for the last one (4).  

The region $R$ cannot be defined for $G_1$, since the min sets of the elements it involves are not in $G_1$. However, a similar region can be defined, for which arguments similar to that of the Crip--Paoluzzi theorem be developed (to a some extent, since the rigidity theorem ultimately fails for the group $G_1$). 

In particular, the elements $b$ and $e$ both belong to $G_1$, since $y$ conjugates $a$ to $e$, and $b=e^{-1}ae$.  Thus, one can define in analogy with $R$ the region 
\[
\Min(a)\cap \Min(z_{a,e})\cap \Min(e)\cap \Min(z_{e,\hat b})\cap \Min(\hat b)\cap \Min(z_{\hat b,a})
\]
for the group $G_1$ (where $\hat b=c^{-2}bc^2=f^2bf^{-2}$). Again this set is invariant under the action of $y$. We prove that for the group $G_1$,  Case (2) occurs, i.e., this region may be reduced to a single point, by constructing an explicit complex for which this is the case.

\subsection{} To construct the space $X_1$, we use the Brady and McCammond construction  \cite{brady2000three} (see also \cite{crisp2005classification}) of the standard complex for the group $B(u,v,w)$. One of the steps in the proof of the rigidity theorem described in \cite[\ts 4.4]{crisp2005classification} shows, roughly speaking, that one cannot construct a deformation of the standard action of $G_0$ by ``rotating’’ three times the Brady--McCammond construction using the action of $y$. We shall construct the complex $X_1$ precisely in this way.  

Consider in the group  $G_1$ the elements $a$, $b$ and $e$ and the subgroup $B_1:=B(a,e,b)$ they generate inside $G_1$. This group admits  the presentation 
$B_1=\ip{a,e,b\mid ae=eb= ba}$ and
   a geometric action on a  CAT(0) space $X_\theta(a,e,b)$, which is shown in \cite[Fig.\ 2]{crisp2005classification}. In this case, the CAT(0) structure depends on an angle parameter $0<\theta<\pi/2$ for the (pairwise isometric, isosceles) triangles attached to the axis of $z_{a,e}$. In fact, this construction classifies  the minimal 2-dimensional  CAT(0) structures on this group. A proof of this result can be found in \cite[\ts 2]{crisp2005classification}.

  Since $y\in G_1$ acts as an order 3 symmetry, it is not difficult to show that one must choose $\theta = \pi/3$ in the construction of the CAT(0) space $X_1$ defined below; therefore we will  simply denote $Y_1:= X_{\pi/3}(a,e,b)$. It is the universal cover of the locally CAT(0) space $\overline Y_1$ having an equilateral triangle for each of the three expressions of $t_{a,e}$ in the presentation of $B_1$.   We refer to \cite{brady2000three} and \cite{crisp2005classification} for more details on this construction.

Consider in $G_1$ the two subgroups $B_2:=yB_1y^{-1}$ and $B_3:=y^2B_1y^{-2}$. They act respectively on CAT(0) space $Y_2$ and $Y_3$, which are constructed as in the previous two paragraphs, again with the same angle parameter $\theta=\pi/3$.

\begin{definition}\label{D - X1}
We let $X_1$ be the universal complex of the space obtained by identifying $\overline Y_1$, $\overline Y_2$ and $\overline Y_3$ over their common  labeled edges, namely, the loops $e$, $\hat b$ and $a$, respectively in $Y_1\cap Y_2$, $Y_2\cap Y_3$ and $Y_3\cap Y_1$.  
\end{definition}

By definition, $X_1$ is endowed with an action of $y$, such that 
\[
y\colon Y_1\to Y_2\to Y_3\to Y_1,
\] 
and the group $G_1=\ip{a,y}$ acts geometrically on $X_1$.

\begin{lemma}\label{L - X1 cat0}
$X_1$ is a CAT(0) space.
\end{lemma}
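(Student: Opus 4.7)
The plan is to apply the Cartan--Hadamard theorem for piecewise Euclidean complexes. By construction, $X_1$ is the universal cover of the compact quotient $\overline X_1 := \overline Y_1 \cup \overline Y_2 \cup \overline Y_3$ (glued along the loops $e$, $\hat b$, $a$), so $X_1$ is simply connected, and it suffices to prove that $\overline X_1$ is locally CAT(0). Since the choice $\theta = \pi/3$ makes all 2-cells of $\overline X_1$ Euclidean equilateral triangles, all corner angles equal $\pi/3$, and Gromov's link condition reduces this to a purely combinatorial statement: the link of each vertex of $\overline X_1$ must be a metric graph whose embedded cycles all have combinatorial length at least $6$.

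The space $\overline X_1$ has a single vertex $v_0$, the common image of the identity cosets of $B_1, B_2, B_3$ in each $\overline Y_i$. Its link $L$ is obtained from the disjoint union $L_1 \sqcup L_2 \sqcup L_3$ (where $L_i$ is the link of the unique vertex of $\overline Y_i$) by identifying, pairwise across two of the $L_i$, the endpoints of the three glued loops $a$, $e$, $\hat b$. By the Brady--McCammond construction at parameter $\theta = \pi/3$, each graph $L_i$ has girth at least $2\pi$, so embedded cycles supported inside a single $L_i$ pose no problem. The work is therefore to check that no shorter cycle appears through the gluing vertices.

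Any embedded cycle $\gamma$ in $L$ that is not contained in some $L_i$ must traverse at least two of the six identified vertices, and thus decomposes as an alternating concatenation of arcs lying in the different $L_i$'s. The core of the proof is a finite combinatorial inspection enumerating the possible arcs between pairs of identified vertices in each Brady--McCammond link $L_i$ and showing that each such concatenation has at least $6$ edges. The $y$-invariance of the construction reduces this check to a single case up to the cyclic symmetry $Y_1 \to Y_2 \to Y_3 \to Y_1$; this step is the main obstacle, but it is precisely the computation that forces $\theta = \pi/3$ in the construction of $X_1$, as already indicated in \S\ref{S - X}. Once the girth inequality is established, Gromov's link condition yields that $\overline X_1$ is locally CAT(0), and Cartan--Hadamard applied to the simply connected universal cover $X_1$ finishes the proof.
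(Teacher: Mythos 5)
Your proposal follows essentially the same route as the paper: reduce to Gromov's link condition via Cartan--Hadamard, describe the link of the vertex as the three Brady--McCammond links $L_1,L_2,L_3$ glued along the direction-vertices of the identified loops $a$, $e$, $\hat b$, and verify that the resulting graph has girth $2\pi$ (equivalently, combinatorial length $6$ when all angles are $\pi/3$). The paper carries out the final girth check by exhibiting the glued link graph explicitly and inspecting it, which is the same finite combinatorial verification you defer to; neither argument differs in substance.
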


\begin{proof}
It suffices to prove the link condition. By construction, the link of a vertex is composed of copies of the links of $B(u,v,w)$ glued together as prescribed by the identifications made in Definition \ref{D - X1}.  The latter graphs are isometric to 
\begin{center}
\includegraphics[width=3cm]{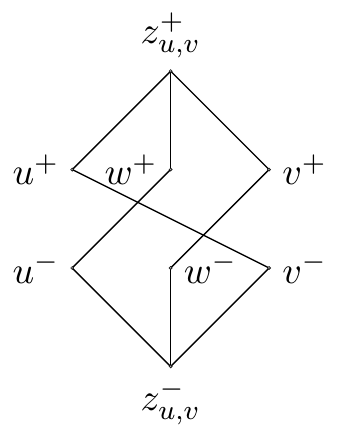} 
\end{center}
In the identification space, the link is  given by 
\begin{center}
\includegraphics[width=10cm]{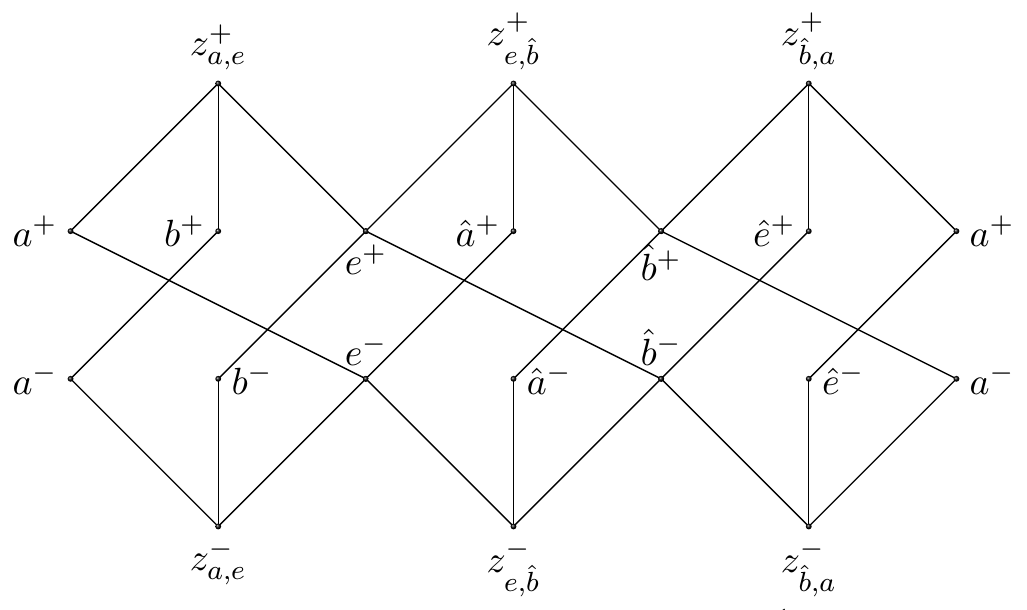} 
\end{center}
It is straightforward to check that the girth of this graph is $2\pi$.
\end{proof}

\begin{remark}
We have therefore obtained the link from \cite[\ts 4.4.\ The case $R\neq\emptyset$ and $\dim(R)\leq 1$, Figure 11]{crisp2005classification}, which is a case that, as Crisp and Paoluzzi have proved, does  not occur for $G_0$. Observe that by construction, the region $R$ defined by
\[
\Min(a)\cap \Min(z_{a,e})\cap \Min(e)\cap \Min(z_{e,\hat b})\cap \Min(\hat b)\cap \Min(z_{\hat b,a})
\]
is reduced to a point.
\end{remark}

\section{Proofs of Theorem \ref{T - virt geom} and Theorem \ref{T - th1 prime}}\label{S - proof theorem 1}

Theorem \ref{T - virt geom} is now a consequence of the following lemma. 

\begin{lemma}\label{L - link embedding}
The link of the Brady complex does not admit a topological, locally isometric embedding into the link of  $X_1$. 
\end{lemma}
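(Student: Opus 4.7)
My plan is to exhibit a combinatorial-metric feature present in the link of the Brady complex but absent in the link of $X_1$, and to argue that this feature obstructs any topological, locally isometric embedding.

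First, I would recall the link $L_0$ of a vertex of the Brady complex: it has $12$ vertices labelled by the edge-germs $a^\pm, b^\pm, c^\pm, d^\pm, e^\pm, f^\pm$, and edges of length $\pi/3$ corresponding to the corners of the $2$-cells of $X_0$ incident to the vertex. These $2$-cells are prescribed by the defining relations $ba=ae=eb$, $de=ec=cd$, $bc=cf=fb$, $df=fa=ad$, $ac=ca$, $ef=fe$ of $G_0$ (see \cite[\ts 3]{crisp2005classification}); in particular, the two commutation relations $ac=ca$ and $ef=fe$ contribute edges of $L_0$ joining vertices in $\{a^\pm,c^\pm\}$ and in $\{e^\pm,f^\pm\}$, respectively.

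The link $L_1$ of $X_1$ was displayed in the proof of Lemma \ref{L - X1 cat0}: it is obtained by identifying three copies of the link of $B(u,v,w)$ along the three pairs of vertices corresponding to $a$, $e$, $\hat b$. By construction, $L_1$ records only the braid-type relations within each $B_i$ and has no analog of the two commutation relations $ac=ca$ and $ef=fe$. Indeed, the remark following Lemma \ref{L - X1 cat0} identifies $L_1$ with the graph of \cite[Fig.\ 11]{crisp2005classification}, whose exclusion for $G_0$ is one of the key steps of the Crisp--Paoluzzi rigidity theorem.

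The obstruction to an embedding then comes from comparing a simple combinatorial-metric invariant of the two graphs, such as vertex valences or the set of simple closed cycles of length $2\pi$. Because $L_0$ records strictly more $2$-cell corners than $L_1$, a vertex of $L_0$ (for instance $a^+$, which sees both braid relations involving $a$ and the commutation $ac=ca$) has valence strictly greater than the maximum valence in $L_1$. A topological, locally isometric embedding $\phi\colon L_0\hookrightarrow L_1$ of metric graphs must send each source vertex of valence $n\geq 3$ to a target vertex of valence at least $n$, since a non-vertex point of $L_1$ has a neighborhood homeomorphic to $\IR$ and cannot receive a tripod germ, and $\phi$ identifies the $n$ incident edge-germs isometrically with distinct edge-germs at the image. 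The valence discrepancy thus rules out $\phi$.

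The main obstacle will be the careful enumeration of edges of each link from the respective presentations, in order to certify rigorously the valence (or cycle) discrepancy; once both graphs are drawn explicitly, the non-embedding reduces to a straightforward combinatorial check.
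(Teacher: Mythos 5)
Your central claim --- that some vertex of the Brady link has valence strictly greater than the maximum valence occurring in the link of $X_1$ --- is false, and the valence comparison in fact goes the wrong way. In the link of $X_1$, each copy of the $B(u,v,w)$-link contributes two vertices of valence $3$ (the germs $z^{\pm}$ of the diagonal $t_{u,v}$) and six vertices of valence $2$; after the three copies are glued along the loops $e$, $\hat b$, $a$, the six identified germs $a^{\pm},e^{\pm},\hat b^{\pm}$ acquire valence $2+2=4$. So the link of $X_1$ has maximal valence $4$, while the essential vertices of the Brady link (the eight vertices surviving after discarding the valence-$2$ subdivision points) all have valence $3$. Consequently every essential vertex of the Brady link can a priori be sent to a vertex of valence $3$ \emph{or} $4$ of the link of $X_1$ --- this is precisely the starting point of the paper's argument, not an obstruction --- and there is no valence discrepancy to exploit. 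Your preliminary description of the Brady link is also inaccurate: it is not the graph on the twelve germs $a^{\pm},\dots,f^{\pm}$ read off from the relations you list (the commutations $ac=ca$, $ef=fe$ would in any case give square, not triangular, corners); it is the graph of \cite[Fig.~6]{crisp2005classification}, with eight valence-$3$ vertices lying on a Hamiltonian cycle of length $8\pi/3$ and four chords of length $2\pi/3$ joining antipodal pairs.

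Since local injectivity plus local isometry only forces valences to be \emph{non-decreasing} under the embedding, the actual obstruction has to be metric rather than a valence count. The paper's proof fixes the Hamiltonian cycle $1,\dots,8$ with chords between $k$ and $k+4$, normalizes the images of two consecutive essential vertices using the symmetry of the link of $X_1$, and then propagates the constraints (each edge of the cycle must map to a path of length $\pi/3$ between the corresponding images, each chord to a path of length $2\pi/3$) until two vertices that are adjacent in the Brady link are forced onto vertices of the link of $X_1$ at distance strictly greater than $\pi/3$. Your passing suggestion to compare ``simple closed cycles of length $2\pi$'' is much closer in spirit to this, but you do not develop it, and that distance bookkeeping is exactly the content of the proof; as written, your argument does not establish the lemma.
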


\begin{proof}
We recall that the link of the Brady complex is given by

\begin{center}
\includegraphics[width=6cm]{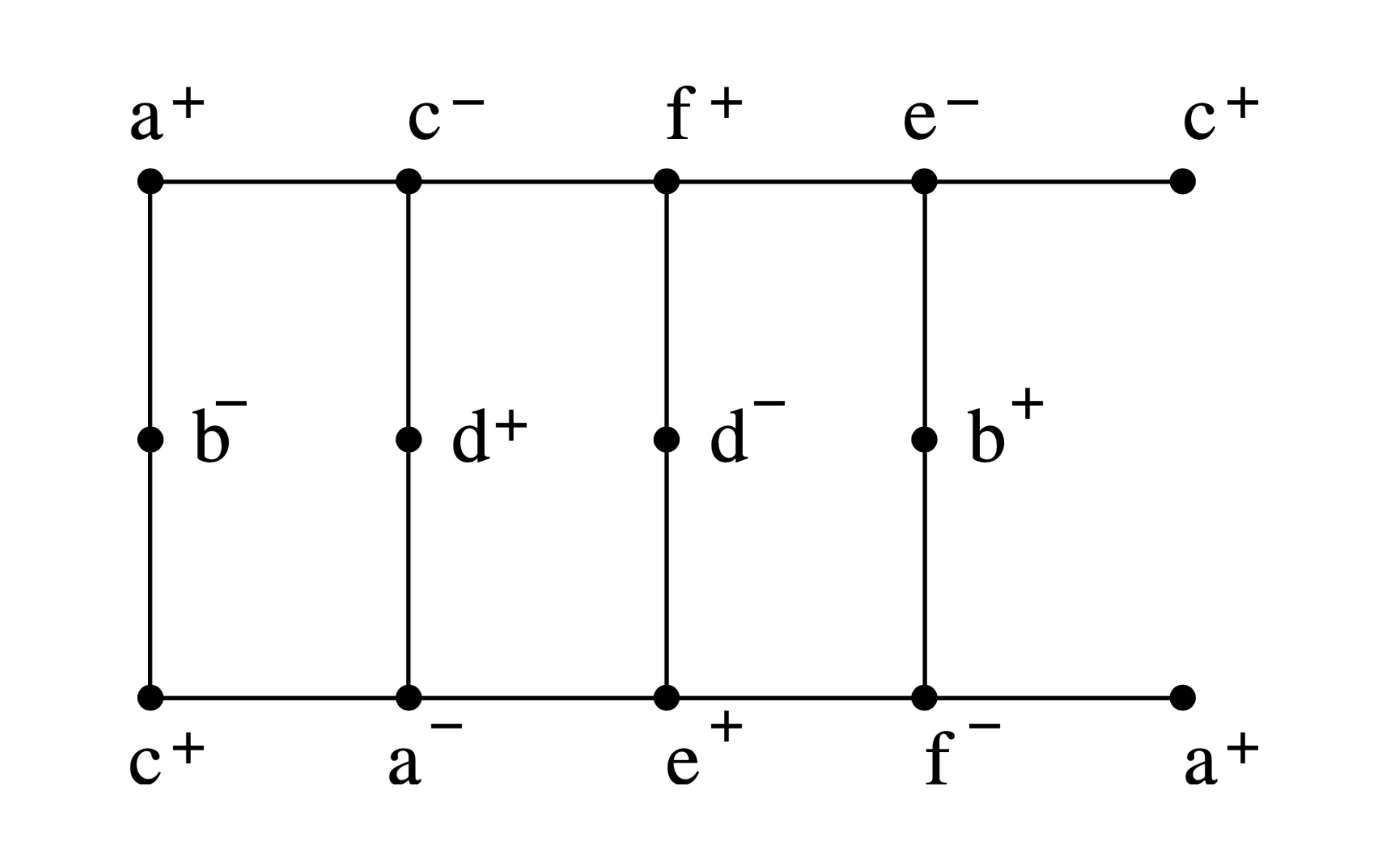}
\end{center}

\noindent (see \cite[Fig. 6]{crisp2005classification}, where the vertex indexing depends on the choice of a representation of the left coset of $\ip x$).

In this graph, consider the Hamiltonian cycle (discarding the vertices of order 2) of length $8\pi/3$ spanning the vertices of order 3. We label the vertices in the graph from 1 to 8 in such a way that there is an edge of length $2\pi/3$ between $k$ and $k+4$ for $k=1,\ldots, 4$. The vertices of order 3 in this graph can be mapped to vertices of order 3 or to vertices of order 4 in the link of $X_1$. By symmetry, we may assume that the vertex 1 goes to $z_{a,e}^+$, and the vertex 4 to $a^{-}$. There are two possible further extensions in the link of $X_1$: the vertex 2 may be mapped to $e^+$ or to $a^+$. In the first case, 5 is mapped to $z_{a,e}^-$, 3 to $z_{e,\hat b}^+$, 6 to $e^{-}$, 4 to $B^+$ and 8 to $z_{e,\hat b}^-$. This is a contradiction, since the distance between $z_{a,e}^+$ and $z_{e,\hat b}^-$ is strictly larger than $\pi/3$. Since the other case also leads to a contradiction in a similar way, the lemma is proved.  
\end{proof}

The following is a direct consequence.

\begin{lemma}
There does not exist a map $f\colon X_0\to X_1$ is locally injective and locally isometric (up to a constant scaling of the metric on $X_1$) on the complement of the 0-skeleton of $X_0$. 
\end{lemma}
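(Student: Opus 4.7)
The strategy is to reduce this statement to the preceding Lemma \ref{L - link embedding} by passing from the map $f$ to the induced map on links at vertices.

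Suppose for contradiction that such a map $f\colon X_0 \to X_1$ exists, with scaling constant $\lambda>0$ on $X_0\setminus X_0^{(0)}$. Fix a vertex $v \in X_0^{(0)}$ and set $w := f(v) \in X_1$. Since $f$ is locally injective and $\lambda$-homothetic on a punctured neighbourhood of $v$, it maps small metric spheres about $v$ injectively into metric spheres of radius $\lambda r$ about $w$. Passing to the limit $r \to 0$, one obtains a continuous map of links
\[
f_*\colon \Lk(v, X_0) \to \Lk(w, X_1).
\]
The scaling factor $\lambda$ is invisible at the angular level, so $f_*$ is locally isometric for the angular metric; local injectivity of $f$ away from $X_0^{(0)}$ implies that $f_*$ is locally injective as well. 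Hence $f_*$ is a topological, locally isometric embedding.

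The next step is to argue that $w$ must itself be a vertex of $X_1$. If $w$ lay in the interior of a $2$-cell, the link $\Lk(w, X_1)$ would be a circle of length $2\pi$, into which the Brady link cannot embed locally isometrically since it contains $8$ trivalent vertices. Similarly, if $w$ lay on the interior of an edge, $\Lk(w, X_1)$ would be a suspension of a discrete set, which again carries no trivalent vertices with the appropriate girth. Thus $w \in X_1^{(0)}$, and $\Lk(w, X_1)$ is (a connected component of) the link computed in the proof of Lemma \ref{L - X1 cat0}.

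We have therefore produced a topological, locally isometric embedding of the link of the Brady complex into the link of $X_1$, in direct contradiction with Lemma \ref{L - link embedding}. The only delicate point is the passage from $f$ to $f_*$ and the elimination of the cases where $f(v)$ is not a vertex; once this is settled, the desired non-existence is exactly the statement of the preceding lemma.
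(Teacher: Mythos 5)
Your proposal is correct and follows exactly the route the paper intends: the paper states this lemma with no written proof, calling it ``a direct consequence'' of Lemma \ref{L - link embedding}, and your argument simply makes explicit the standard reduction (inducing a locally injective, locally isometric map on links and ruling out images in the interiors of edges and $2$-cells) that the authors leave implicit.
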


The proofs of and Theorem \ref{T - virt geom} and Theorem \ref{T - th1 prime} are similar, and rely on a slightly different set of assumptions, which requires the Crip--Paoluzzi rigidity theorem in the first statement, and assumes it in the second one.  

\begin{proof}[Proof of Theorem \ref{T - virt geom}]
Since $G_1$ acts geometrically on $X_1$ and is of finite index in $G_0$, it remains to prove that $G_0$ does not no semisimple properly discontinuous action on $X_1$ by isometries. This is a straightforward consequence of the previous lemma and the Crip--Paoluzzi theorem.   Namely, 
suppose that $G_0$ admits such an action.  Consider the standard action of $G_0$ on the Brady complex $X_0$. By \cite{crisp2005classification}, Theorem 1, and Remark (2) following the statement of this theorem, there exists an equivariant mapping $f_0\colon X_0\to X_1$ which is locally injective, and locally isometric on the complement of the 1-skeleton of $X_0$.  This contradicts the previous lemma. 
\end{proof}

\begin{proof}[Proof of Theorem \ref{T - th1 prime}]
The assumption is that the Crip--Paoluzzi theorem ``holds for $G_1$ acting on $X_0$’’ in the sense that there exists a $G_1$-equivariant map $f\colon X_0\to X_1$,  with respect to the standard action of $G_0$ on $X_1$, viewing $G_1$ as an embedded subgroup as above, and to the action of $G_1$ on $X_1$ defined above,  which is locally injective and locally isometric (up to a constant scaling of the metric on $X_1$) on the complement of the 0-skeleton of $X_0$. 
Again, this contradicts the previous lemma and shows that $G_1$ admits a geometric action on a space $X_1$ which is not a deformation of  $X_0$. More generally, since the previous lemma holds without an equivariance assumption,  the same is true of every finite index subgroup of $G_1$. 
\end{proof}

\section{Proof of Theorem \ref{T - th2}}

\begin{proof}[Proof of Theorem \ref{T - th2}]
Assume that there exists a proper semisimple action of $G_0$ on a CAT(0) space $X$ of covering dimension 2, such that there exists a $G_1$-equivariant map
\[
f_1\colon X_1\to X
\]
where $G_1$ acts on $X$ through the embedding $G_1\leq G_0$, which is locally injective, and locally isometric (up to scaling)  outside the 0-skeleton of $X_1$.

By the Crisp--Paoluzzi rigidity theorem, there exists a $G_0$-equivariant map 
\[
f_0\colon X_0\to X
\]
which is locally injective, and locally isometric (up to scaling) outside the 0-skeleton of $X_0$.

If $a$ and $b$ are two commuting elements in $G_1$, we denote by $\Pi_0(a,b)$ (resp.\ $\Pi_1(a,b)$, $\Pi(a, b)$) the intersection of $\Min(a)$ and $\Min(b)$ in $X_0$  (resp.\ $X_1$, $X$). In the three cases, these are flat planes, on which the group generated by $a$ and $b$ acts freely with compact quotient. Furthermore, the image of $Y_1$ under $f_1$ is the union of the planes 
\[
f_1(\Pi_1(a, z_{a,e}))=\Pi(a, z_{a,e})
\] 
\[
f_1(\Pi_1(e, z_{a,e}))=\Pi(e, z_{a,e})
\] 
and their translates under the action of $B_1$. A similar fact is true of $Y_i$ for $i=2,3$. Thus, the image of $X_1$ in $X$, being the union of the images of $Y_i$, $i=1,2,3$, is uniquely determined. Similarly, the image $f_0(X_0)$ contains $\Pi(a, z_{a,e})$ and $\Pi(e, z_{a,e})$, and by equivariance, we have $Y_i\subset f_0(X_0)$ for $i=1,2,3$. Therefore, $f_1(X_1)\subset f_0(X_0)$. 

Let $p_0$ be the fixed point of $y$ in $X_0$. The map $f_0$ embeds injectively (and necessarily  isometrically) the link of $p_0$ in $X_0$, which is a circle of length $2\pi$, into that of $p:=f_0(p_0)$ in $X$. By equivariance, since the image $f_0(X_0)$ of the Crip--Paoluzzi map contains a unique fixed point,  the fixed point $p_1$ of $y$ in $X_1$ maps to $p$ under $f_1$. 
Since $f_1(X_1)\subset f_0(X_0)$ and $f_1$ is locally injective, the link of $p_1$ in $X_1$, which is isometric to the graph in Lemma \ref{L - X1 cat0}, maps injectively into the link of $p$ in $f_0(X_0)$. This is absurd, since the latter is a circle.  
\end{proof}

\begin{remark} We have shown that $G_1\acts X_0$ and $G_1\acts X_1$ are two geometric actions of $G_1$, neither of which is can be deformed to the other in the sense intended in the statement of the Crisp--Paoluzzi rigidity theorem. In particular, there does  not exist a ``standard geometric action’’ of the group $G_1$ in dimension 2 which can play the role of the Brady action. Nonetheless, the existence of a well-defined region $R$, analogous to that of Crisp and Poaluzzi, together with the above arguments,  indicates that ``some form of rigidity'' should remain in the group $G_1$ (and perhaps other finite index subgroups). 
\end{remark}

\section{Proof of Theorem \ref{T - virt geom 2}}

Consider the preimage $H_1$ of $G_1$ in $B_4$. This is a finite index subgroup, which is  a central extension of $G_1$. Therefore, there exists a  geometric action of $H_1$ on the metric Cartesian product $Y_1:=\IR\times X_1$, where $H_1$  acts on $\IR$ through the action of $B_4$ induced by the augmentation map $B_4\surj \ZI$. Note that $H_1$ acts freely (since $B_4$ is torsion free, every properly discontinuous action is free).

Suppose that there exists a minimal properly discontinuous action of $B_4$ on $Y_1$ by semisimple isometries. 
 Then by \cite[Theorem II.6.8-15]{bridson1999metric}, $Y_1=\IR\times X_1'$ splits as a metric product, and the central element $z$ acts as a Clifford translation $(t,p)\mapsto (t+|z|,p)$, and the action of $B_4$ on $Y_1$ factorize to a geometric action of $G_0=B_4/Z$ on $X_1'$. By the rigidity theorem, there exists an equivariant map $f\colon X_0\to X_1'$ which is (up to scaling) locally injective and locally isometric on the complement of the zero skeleton of $X_0$. In particular,  $\Lk(X_0)$ admits a topological, locally isometric embedding into the link of $X_1'$ at a vertex. It follows that the suspension $\Sigma \Lk(X_0)$ admits a topological, locally isometric embedding into $\Lk(Y_1)$. Since the latter is isometric to $\Sigma \Lk(X_1)$, there exists a topological, locally isometric embedding of the link of $X_0$ into that of $X_1$. 
  This contradicts Lemma \ref{L - link embedding}.


\begin{thebibliography}{1}

\bibitem{brady1994automatic}
T.~Brady.
\newblock Automatic structures on $\Aut(F_2)$.
\newblock {\em Archiv der Mathematik}, 63(2):97–102, 1994.

\bibitem{brady2000artin}
T.~Brady.
\newblock Artin groups of finite type with three generators.
\newblock {\em The Michigan Mathematical Journal}, 47(2):313–324, 2000.

\bibitem{brady2000three}
T.~Brady and J.~P. McCammond.
\newblock Three-generator artin groups of large type are biautomatic.
\newblock {\em Journal of Pure and Applied Algebra}, 151(1):1–9, 2000.


\bibitem{bridson1999metric}
M.~R. Bridson and A.~Haefliger.
\newblock {\em Metric spaces of non-positive curvature}, volume 319.
\newblock Springer Science \& Business Media, 1999.



\bibitem{crisp2005classification}
J.~Crisp, L.~Paoluzzi.
\newblock On the classification of CAT(0) structures for the 4-string braid
  group.
\newblock {\em The Michigan Mathematical Journal}, 53(1):133–163, 2005.

\bibitem{serre1977arbres}
J.~P. Serre and H.~Bass.
\newblock {\em Arbres, amalgames, $\SL_2$: cours au Coll\`ege de France}.
\newblock Soci\'et\'e math\'ematique de France, 1977.



\end{thebibliography}
\end{document}